\newcommand{\R}{\mathbb R}
 \newcommand{\N}{\mathbb N}
  \newcommand{\E}{\mathbb E}
\newcommand{\ve}{\varepsilon}
\newcommand{\dive}{{\rm div}}
\newcommand{\I}{\int}
\newtheorem{theorem}{Theorem}[section]
\newtheorem{proposition}[theorem]{Proposition}
 \newtheorem{remark}[theorem]{Remark}
\newtheorem{definition}[theorem]{Definition}
\newtheorem{hypothesis}[theorem]{Hypothesis}
\begin{document}

\title{ Well-posedness  of  the  stochastic transport   equation with 
 unbounded drift.   }

\author{David A.C. Mollinedo \footnote{Universidade Tecnol\'{o}gica Federal do Parana, Brazil. E-mail:  {\sl davida@utfpr.edu.br}
}, 
Christian Olivera\footnote{Departamento de Matem\'{a}tica, Universidade Estadual de Campinas, Brazil. 
E-mail:  {\sl  colivera@ime.unicamp.br}.
}}

\date{}

\maketitle

\textit{Key words and phrases. 
Stochastic partial differential equation, transport equation, Low regularity, Stochastic characteristic method.}

\vspace{0.3cm} \noindent {\bf MSC2010 subject classification:} 60H15, 
 35R60, 
 35F10, 
 60H30. 


%
\begin{abstract}
The Cauchy problem for a multidimensional linear transport
equation with unbounded drift is investigated.
Provided the  drift is  Holder continuous , existence, uniqueness
and strong  stability of solutions are obtained. The proofs are  based on a careful analysis of the associated stochastic flow of characteristics and  techniques of stochastic analysis.  
 \end{abstract}
%
\maketitle

%

\section {Introduction} \label{Intro}

We consider the deterministic linear transport equation in $ \mathbb{R}^{d}$

\begin{equation}\label{trasports}
    \partial_t u(t, x) +  b(t,x) \cdot  \nabla u(t,x)  = 0 \, ,
\end{equation}

this equation is one of the most fundamental and at the same time most
elementary partial differential equation with applications in a wide range of problems
from physics, engineering, biology or social science. 

 When the coefficients are regular  the unique solution is found  by the method of characteristics.  
Recently research activity has been devoted to study continuity/transport equations with rough coefficients, showing a well-posedness result.
Di Perna andLions   \cite{DL}  have introduced the notion of renormalized solution  to transport equation :
it is a solution such that

\begin{equation}\label{renord}
    \partial_t \beta (u(t, x) )+  b(t,x) \nabla \beta ( u(t,x))   = 0 .
\end{equation}

for any suitable non-linearity $\beta$. Notice that (\ref{renord}) holds for smooth solutions, by
an immediate application of the chain-rule. However, when the vector field is not
smooth, we cannot expect any regularity of the solutions, so that (\ref{renord}) is a nontrivial
request when made for all bounded distributional solutions. The renormalization property asserts that nonlinear compositions of
the solution are again solutions, or alternatively that the chain-rule holds in this weak
context. The overall result which motivates this definition is that, if the renormalization
property holds, then solutions of (\ref{trasports}) are unique and stable.

 In the case $b$ has  $W^{1,1}$ 
spatial regularity
(together with a condition of boundedness on the divergence) 
  the commutator lemma between smoothing convolution and weak solution  can be
proved and, as a consequence, all $L^{\infty}$-weak solutions are renormalized. Afters some intermediate results  (
\cite{Colomb1}  and  \cite{Colomb2}  )
the theory has been generalized
by    L. Ambrosio \cite{ambrisio} to the case of only $BV$  regularity for b instead of   $W^{1,1}$.
See  \cite{ambrisio2} and \cite{lellis}    for a 
nice review on that.

We consider the following stochastic transport equation 
 
\begin{equation}\label{trasport}
 \left \{
\begin{aligned}
    &\partial_t u(t, x) +  \,   \, \big(b(t, x) + \frac{d B_{t}}{dt} \big) \nabla u(t, x) = 0,
    \\[5pt]
    &u|_{t=0}=  u_{0},
\end{aligned}
\right .
\end{equation}
$\big( (t,x) \in U_T, \omega \in \Omega \big)$, where $U_T= [0,T] \times \R^d$, for
 $T>0$ be any fixed real number, $(d \in \N)$, $b:\R_+ \times
\R^d \to \R^d$ is a given vector field,  
$B_{t} = (B_{t}^{1},...,B _{t}^{d} )$ is a
standard Brownian motion in $\mathbb{R}^{d}$ and the stochastic
integration is taken (unless otherwise mentioned) in the Stratonovich sense. 

\medskip

\bigskip
The Cauchy problem for the stochastic transport/continuity  equation has taken great attention recently.  F. Flandoli, M. Gubinelli and  E. Priola
in \cite{FGP2}  obtained well-posedness of the stochastic problem for a bounded  H\"older  continuous drift term, with some integrability conditions on the divergence.  E.Fedrizzi and F. Flandoli  in \cite{Fre1}  obtained a well-posedness result , in the class of   local Sobolev solutions,  under only some integrability conditions on the drift. There, it is only assumed that
\begin{equation}\label{LPSC}
    \begin{aligned} 
    &b\in L^{q}\big( [0,T] ; L^{p}(\mathbb{R}^{d}) \big) \, , \\[5pt]
 \mathrm{for}  \qquad &\  p,q \in [2,\infty) \, , \qquad   \qquad  \frac{d}{p} + \frac{2}{q} < 1 \, .
\end{aligned}
\end{equation}
The well-posedness of the Cauchy problem \eqref{trasport} under condition \eqref{LPSC}
for measurable initial condition was also  considered in \cite{NO} but with free divergence condition. In \cite{Beck}, using a technique based on the regularising effect 
observed on expected values of moments of the solution, well-posedness of \eqref{trasport} was obtained also for the limit cases of $p,q=\infty$ or when the inequality in \eqref{LPSC} becomes an equality. In \cite{Fre2}  the authors proved  uniqueness when the field   vectors $b\in L_{loc}^{2}$ for a new class of  solutions. Finally , we mention the paper \cite{Moli} where the authors show uniqueness of the
one-dimensional  continuity equations when  the drift is measurable and  the linear growth.

\medskip

 In this  paper we obtain a well-posedness result for unbounded vector fields. We prove  existence, uniqueness   and strong stability result   for   $W^{1,p}-$ solutions with  unbounded locally Holder continuous drift and $\dive b\in L^{q}_{loc} $
 . The proofs are  based on a careful analysis of the associated stochastic flow of characteristics and stochastic calculus techniques. 
The uniqueness  result is an improvement of the condition considered  in the seminar paper \cite{FGP2} where   case  bounded locally Holder drift was considered.

 We  would like to point that in the deterministic transport equation it is not possible to 
show existence in the class of  $W^{1,p}-$ solutions. As showed by Colombini, Luo and Rauch in \cite{Colom}, 
there exists an important example of $b \in L^\infty \cap W^{1,p}, (\forall p < \infty)$, such that
the  propagation of the continuity in the deterministic transport equation is missing. That is to say,
 one may start  with a continuous initial data, but the deterministic solution of the transport equation is not continuous.
However, in the stochastic case we have the persistence property.

\medskip

In fact, through of this paper, we fix a stochastic basis with a
$d$-dimensional Brownian motion $\big( \Omega, \mathcal{F}, \{
\mathcal{F}_t: t \in [0,T] \}, \mathbb{P}, (B_{t}) \big)$.

\subsection{Hypothesis}

 Throughout   this article we consider $1< p < \infty$. 
 We assume  

\begin{hypothesis}\label{hyp}
\begin{equation}\label{con1}
    b\in C^{\theta}(\R^d,\R^d)\,,
\end{equation}
and
\begin{equation}\label{con2}
 \dive \, b(x) \in L_{loc}^{q} \big( \R^d \big) \ with \  \frac{1}{p} +  \frac{1}{q}=1 .
\end{equation}
Moreover, the initial condition is taken to be
\begin{equation*}\label{conIC}
 u_0 \in  W^{1,2p}(\R^d)   \, .
\end{equation*}
\end{hypothesis}

\section{Results.}

\subsection{Notations}

For any  $\theta \in (0,1)$, we denoted   $C^{\theta}(\R^d;\R^d)$, $d \geq 1$ the space of the   field vectors  $f:\R^d \to \R^d$  such
that  
\begin{align*}
[f]_{\theta}:=\sup_{x\neq y, |x-y|\leq 1} \frac{|f(x)-f(y)|}{|x-y|^\theta} < \infty\,.
\end{align*}

 The space $C^{\theta}(\R^d;\R^d)$ is the Banach space with the norm

\begin{align}\label{eq-t2-norma para C theta}
\|f\|_{\theta}=\|(1+|\cdot|)^{-1}f\|_\infty+[f]_{\theta}. 
\end{align}

\medskip

Let us start by setting the notation used and then recalling the main results. For $0\leq s\leq t$ and $x\in\mathbb{R}^{d}$, consider the following
stochastic differential equation in $\mathbb{R}^{d}$%

\begin{equation}
\label{itoass}X_{s,t}(x)= x + \int_{s}^{t} b( X_{s,r}(x)) \ dr + B_{t}-B_{s},
\end{equation}

\noindent where $X_{s,t}(x)= X(s,t,x)$, also $X_{t}(x)= X(0,t,x)$.   Under condition (\ref{con1}), $X_{s,t}(x)$ is a
stochastic flow of $C^{1}$-diffeomorphism( see \cite{FGP}) Moreover, the inverse $Y_{s,t}(x):=X_{s,t}^{-1}(x)$ satisfies the
following backward stochastic differential equation%

\begin{equation}
\label{itoassBac}Y_{s,t}(x)= x - \int_{s}^{t} b( Y_{r,t}(x)) \ dr - (B_{t}-B_{s}),
\end{equation}

\noindent for $0\leq s\leq t$, see \cite{FGP2}. We denote  by $\phi_{s,t}$ the flow associeted to $X_{s,t}$ and $\psi_{s,t}$ its inverse.

We  also recall   the important results  in \cite{FGP} : Let     $b_n\in C^{\theta}(\R^d,\R^d)$,
 and let $\phi_{s,t}^{n}$ be the corresponding stochastic flows, assume that $\|b_n -b\|_{C_b^{\theta}(\R^d,\R^d)
}\rightarrow 0 $ as $n\rightarrow \infty$, then for all $p\geq 1$ we have

\begin{equation}\label{est1}
 \lim_{n\rightarrow\infty}  \sup_{x \in \R^d}   \sup_{s\in[0,T]} \mathbb{E}[  \sup_{t \in [s,T]}  |D\phi_{s,t}^{n}(x)- D\phi_{s,t}(x)|^p] =0 ,
\end{equation}

\begin{equation}\label{est2}
 \lim_{n\rightarrow\infty}\sup_{x \in \R^d} \sup_{s \in [0,T]}\mathbb{E}[  \sup_{t \in [s,T]}  |\frac{\phi_{s,t}^{n}(x)- \phi_{s,t}(x)}{ 1+ |x|}|^p] =0 ,
\end{equation}

and

\begin{equation}\label{est3}
 \sup_{n}  \sup_{x \in \R^d}   \sup_{s\in[0,T]} \mathbb{E}[  \sup_{t \in [s,T]}  |D\phi_{s,t}^{n}(x)|^p] <  \infty.
\end{equation}

The same results are valid for the backward flows $\psi_{s,t}^{n}$ and  $\psi_{s,t}$ since 
are solutions of the same SDE driven by the drifts $-b_{n}$ and $-b$.

\subsection{Definition.}

The next definition tells us in which sense a stochastic process is a $W^{1,p}$-solution of \eqref{trasport}. We denoted $ C_0^{\infty}(\R^d)$
the space of the  test functions with compact support.  We  denoted $\mu=e^{-|x|^{2}}$  the gaussian measure in $\R^{d}$. 

\begin{definition}\label{defisolu1}
 A stochastic process $u\in  L^{2p}( \Omega\times[0, T] \times \mathbb{R}^{d}) \cap L^{p}( \Omega\times[0, T], W^{1,p}(\mathbb{R}^{d}),\mu)$ is called a  $W^{1,p}$- weak solution of the Cauchy problem \eqref{trasport} when:
 for any $\varphi \in C_0^{\infty}(\R^d)$, the real valued process $\int  u(t,x)\varphi(x)  dx$ has a continuous modification which is an
$\mathcal{F}_{t}$-semimartingale, and for all $t \in [0,T]$, we have $\mathbb{P}$-almost surely
\begin{equation} \label{DISTINTSTR}
\begin{aligned}
    \int_{\R^d} u(t,x) \varphi(x) dx = &\int_{\R^d} u_{0}(x) \varphi(x) \ dx
   -\int_{0}^{t} \!\! \int_{\R^d}     \, b^{i}(x) \partial_i u(s,x) \varphi(x) \ dx ds
\\[5pt]
    &+ \int_{0}^{t} \!\! \int_{\R^d} u(s,x) \ \partial_{i} \varphi(x) \ dx \, {\circ}{dB^i_s} \, .
\end{aligned}
\end{equation}
\end{definition}

\bigskip

\begin{remark}\label{lemmaito}  Using the same idea as in Lemma 13 \cite{FGP2}, on can write the problem (\ref{trasport}) in
 It\^o form as follows. A  stochastic process   $u\in  L^{2p}( \Omega\times[0, T] \times \mathbb{R}^{d}) \cap L^{p}( \Omega\times[0, T], W^{1,p}(\mathbb{R}^{d}),\mu)$
is  a weak $W^{1,p}-$solution  of the SPDE (\ref{trasport}) iff for every test function $\varphi \in C_{0}^{\infty}(\mathbb{R}^{d})$, the process $\int u(t, x)\varphi(x) dx$ has a continuous modification which is a $\mathcal{F}_{t}$-semimartingale and satisfies the following It\^o' formulation

\[
\int u(t,x) \varphi(x) dx= \int u_{0}(x) \varphi(x) \ dx
\]
\[
-\int_{0}^{t} \int   b^{i}(x) \partial_i u(s,x)   \varphi(x)  \ dx\, ds
\]
\[
+  \int_{0}^{t} \int  \partial_{i}\varphi(x) u(s,x) \ dx
\ dB_{s}^{i} +\frac{1}{2}   \int_{0}^{t} \int  \Delta \,\varphi(x) u(s,x) \ dx\, ds  . 
\]
\end{remark}

\subsection{   Existence.}

\begin{proposition}\label{renor}  We assume  hypothesis (\ref{hyp}). Then $u(t,x)=u_{0}(X_t^{-1})$ is a  $W^{1,p}-$solution of the equation   \eqref{trasport}.
\end{proposition}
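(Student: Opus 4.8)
The plan is to prove the representation formula by regularization: smooth the drift, verify by the classical theory of stochastic flows that the candidate is a strong solution of the regularized equation, and then pass to the limit using the stability estimates \eqref{est1}--\eqref{est3}. Throughout write $\phi_t := \phi_{0,t}$ and $\psi_t := \psi_{0,t} = X_t^{-1}$, so that the candidate is $u(t,x) = u_0(\psi_t(x))$.

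First I would verify the integrability demanded by Definition \ref{defisolu1}. By the chain rule for a Sobolev function composed with a $C^1$-diffeomorphism one has $\nabla_x u(t,x) = (\nabla u_0)(\psi_t(x))\,D\psi_t(x)$. To estimate $\int u^{2p}\,dx$ and $\int |\nabla_x u|^p\,\mu\,dx$ I would change variables $y=\psi_t(x)$, whose Jacobian is $|\det D\phi_t(y)| = \exp\big(\int_0^t \dive b(\phi_s(y))\,ds\big)$ by the Liouville formula, since $\tr Db = \dive b$. Estimating the moments of this Jacobian is exactly where Hypothesis \eqref{con2} is used: H\"older's inequality with the conjugate exponents $p,q$ of \eqref{con2}, together with $\dive b\in L^q_{loc}$ and the gradient bound \eqref{est3}, yields $u\in L^{2p}(\Omega\times[0,T]\times\R^d)\cap L^{p}(\Omega\times[0,T],W^{1,p}(\R^d),\mu)$.

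Next, for mollified drifts $b_n\to b$ in $C^{\theta}$, the flow $\phi^n_t$ is smooth and I claim that $u_n(t,x):=u_0(\psi^n_t(x))$ is a strong solution of \eqref{trasport} with drift $b_n$. Indeed, applying Kunita's chain rule (It\^o's formula for the inverse flow \eqref{itoassBac}) to $u_0(\psi^n_t(x))$ produces precisely the Stratonovich identity $du_n = -b_n\cdot\nabla u_n\,dt - \nabla u_n\circ dB_t$; integrating against a test function then gives \eqref{DISTINTSTR} with $b_n$ in place of $b$. If $u_0$ lacks the regularity for the pointwise chain rule, one first mollifies $u_0$ and removes the regularization afterwards using the $W^{1,2p}$ bound.

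Finally I would let $n\to\infty$ in the weak formulation, working with the It\^o form of Remark \ref{lemmaito}. Then the stochastic term $\int_0^t\!\int u_n\,\partial_i\varphi\,dx\,dB^i_s$ is a martingale whose convergence follows from the It\^o isometry once $u_n\to u$ in $L^2(\Omega\times[0,T]\times K)$ for $K\supset\mathrm{supp}\,\varphi$, which is a consequence of \eqref{est2} and the change of variables above; the initial and Laplacian correction terms pass to the limit at once. The main obstacle is the drift term $\int_0^t\!\int b_n^i\,\partial_i u_n\,\varphi\,dx\,ds$. Splitting $b_n^i\partial_i u_n - b^i\partial_i u = (b_n^i - b^i)\partial_i u_n + b^i(\partial_i u_n - \partial_i u)$, the first piece is controlled by the uniform convergence $b_n\to b$ on $\mathrm{supp}\,\varphi$ and a uniform $L^p$ bound on $\partial_i u_n$, while the second demands $\partial_i u_n = (\nabla u_0)(\psi^n_t)\,\partial_i\psi^n_t \to \partial_i u$. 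Since $\nabla u_0\in L^{2p}$ is only integrable and not continuous, this composition cannot be handled pointwise; instead I would change variables to transfer the flow and its Jacobian onto the smooth factor $b^i\varphi$, reducing matters to the convergences $\phi^n_t\to\phi_t$ and $D\phi^n_t\to D\phi_t$ from \eqref{est1}--\eqref{est2} together with the $L^q_{loc}$ control of the Jacobians. This is precisely the step for which the assumption $\dive b\in L^q_{loc}$ with $\frac1p+\frac1q=1$ is indispensable.
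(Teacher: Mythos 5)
Your overall strategy --- mollify the drift, represent the regularized solution through the inverse flow by Kunita's theory, pass to the limit with \eqref{est1}--\eqref{est3}, and remove the regularization of $u_0$ afterwards --- is the same as the paper's, but two steps as you state them would fail. First, the integrability check: you compute the Jacobian by the Liouville formula $J\phi_t=\exp\big(\int_0^t \dive b(\phi_s)\,ds\big)$ and propose to estimate its moments by H\"older's inequality with the exponents $p,q$ of \eqref{con2}. At this regularity ($b$ only $C^\theta$, $\dive b$ merely an element of $L^q_{loc}$ in the distributional sense) the formula itself is not justified --- the $C^1$ regularity of the flow in \cite{FGP} comes from a Zvonkin-type transform, not from differentiating $b$ along trajectories --- and even formally it is useless: an $L^q_{loc}$ bound on $\dive b$ gives no control whatsoever on exponential moments $\E\exp\big(\int_0^t \dive b(\phi_s)\,ds\big)$; H\"older's inequality does not tame an exponential. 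The paper avoids this entirely: $J\phi_t\leq C_d\,|D\phi_t|^d$, and \eqref{est3} supplies moments of $D\phi_t$ of every order, uniformly in $x$ and $s$, so hypothesis \eqref{con2} plays no role in the Jacobian bound.

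Second, and this is the main conceptual gap, you misidentify where \eqref{con2} actually enters. The paper handles the drift term by integrating by parts in $x$,
\[
\int_{\R^d} b^{i,\varepsilon}\,\partial_i u^\varepsilon\,\varphi\,dx
= -\int_{\R^d} u^\varepsilon\, b^{i,\varepsilon}\,\partial_i\varphi\,dx-\int_{\R^d} u^\varepsilon\, \dive b^{\varepsilon}\,\varphi\,dx,
\]
so that no derivative of $u^\varepsilon$ appears; the limit then follows by dominated convergence, and the term $\int u\,\dive b\,\varphi$ is exactly the $L^p$--$L^q$ duality pairing for which $\tfrac1p+\tfrac1q=1$ is needed. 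Your splitting $(b_n^i-b^i)\partial_i u_n + b^i(\partial_i u_n-\partial_i u)$ followed by a change of variables is left as a sketch, and its stated mechanism is wrong on two counts: $b^i\varphi$ is not a ``smooth factor'' (only H\"older continuous, though continuity would suffice to pass $b(\phi^n_t)\to b(\phi_t)$ pointwise), and after substituting $y=\psi^n_t(x)$ the convergence would rest on \eqref{est1}--\eqref{est2} and the moment bounds \eqref{est3}, not on any ``$L^q_{loc}$ control of the Jacobians'' --- no such control follows from \eqref{con2}, nor is it needed, since \eqref{est3} already bounds all Jacobian moments. In particular, if your route were carried out it would never invoke \eqref{con2} at all, which should have been a warning sign that it departs from the argument actually required and has not been verified. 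The repair is to replace the Liouville computation by the bound via \eqref{est3}, and to replace your drift-limit sketch by the integration by parts above, which is precisely how the paper uses $\dive b\in L^q_{loc}$.
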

\begin{proof}

{\it Step 1 : Regular initial data.} We assume that $u_0 \in C_0^{\infty}(\R^d)$. Let $\{\rho_\varepsilon\}_\varepsilon$ be a family of standard symmetric mollifiers.
  We define the family of regularised coefficients as
 $b^{\epsilon}(x) = (b \ast \rho_\varepsilon) (x)  $.
 
For any fixed $\varepsilon>0$, the classical theory of Kunita, see \cite{Ku} or \cite{Ku3},
 provides the existence of a unique 
  solution $u^{\varepsilon}$ to the regularised equation 
	
	\begin{equation}\label{trasport-reg}
 \left \{
\begin{aligned}
    &d u^\varepsilon (t, x) +  \nabla u^\varepsilon (t, x)  \cdot \big( b^\varepsilon ( x)  dt +
 \circ d B_{t} \big) = 0 \, ,
    \\[5pt]
    &u^\varepsilon \big|_{t=0}=  u_{0}
\end{aligned}
\right .
\end{equation}

in terms of the (regularised) initial condition and the inverse flow $(\phi_t^\varepsilon)^{-1}$ associated to the equation of characteristics of \eqref{trasport-reg}, which reads 
\begin{equation*}
d X_t = b^\varepsilon ( X_t) \, dt + d B_t \, ,  \hspace{1cm}   X_0 = x \,.
\end{equation*}

We denoted $(\phi_t^\varepsilon)^{-1}$  by $\psi^\epsilon$. If $u^\varepsilon$ is a solution of \eqref{trasport-reg}, it is also a weak solution, which means that for any test function $\varphi \in C_c^\infty(\R^d)$, $u^\varepsilon$ satisfies 
the following equation (written in It\^o form)
\begin{equation}\label{transintegralR2}
\begin{aligned}
    \int_{\R^d} u^\varepsilon(t,x) &\varphi(x) \, dx= \int_{\R^d} u_{0}(x) \varphi(x) \, dx
   - \int_{0}^{t} \!\! \int_{\R^d} \partial_i u^\varepsilon(s,x) \, b^{i, \varepsilon}(x)  \varphi(x) \, dx ds
\\[5pt]
    &+ \int_{0}^{t} \!\! \int_{\R^d} u^\varepsilon(s,x) \, \partial_{x_i} \varphi(x) \, dx \, dB^i_s
    + \frac{1}{2} \int_{0}^{t} \!\!\int_{\R^d} u^\varepsilon(s,x) \Delta \varphi(x) \, dx ds \, .
\end{aligned}
\end{equation}

 We claim that 

\begin{equation}\label{eq-t2-conv do u espilon con fluxo epsilon}
u_0 (\psi_t^{\epsilon}) \rightarrow u_0 (\psi_t)\,,\quad\mathbb{P}\otimes dt \otimes dx-\text{a.e}\,\,\,\,\, \Omega\times[0,T]\times B_R\,,
\end{equation}

as  $\epsilon \rightarrow 0$. In fact, we have 

\[
\lim_{\epsilon\rightarrow 0}  \I_{0}^{T} \I_{B_R} \I_{\Omega}|\psi_{t}^{\epsilon}(x)-\psi_{t}(x)|^p\,\mathbb{P}(dw)\,dx\,dt 
\]
\[
 \leq (1+R)^p\lim_{\epsilon\rightarrow 0} \I_0^T \I_{B_R}\I_{\Omega}\frac{|\psi_t^\epsilon(x)-\psi_t(x)|^p}{(1+|x|)^p}\mathbb{P}(dw)\,dx\,dt 
\]

\[
\leq T |B_R| (1+R)^p \lim_{\epsilon\rightarrow 0} \sup_{t\in [0,T]} \sup_{x\in B_R}\E\bigg[\frac{|\psi_t^\epsilon(x)-\psi_t(x)|^p}{(1+|x|)^p}\bigg] =0.
\]

Therefore, the sequence  $\{\psi^\epsilon\}$ converge  to $\psi$ in  $L^p(\Omega\times[0,T]\times B_R)$  as  $\epsilon \rightarrow 0$. Thus 
there exist a subsequence  $\psi^{\epsilon}$, such that 
\begin{equation}
\psi^\epsilon\rightarrow \psi\,,\quad\mathbb{P}\otimes dt \otimes dx-\text{a.e.}\,\,\,\,\, \Omega\times[0,T]\times B_R\,,
\end{equation}

Now, we have 

\[
\int_{0}^{t} \int_{\R^d} \partial_i u^\epsilon(s,x) \ b^{i, \epsilon}(x) \ \varphi(x) \ dx \,ds 
=
\]
\[
- \int_{0}^{t} \int_{\R^d} u^\epsilon(s,x) \ b^{i, \epsilon}(x) \ \partial _i \varphi(x) \ dx \,ds 
- \int_{0}^{t} \int_{\R^d} u^\epsilon(s,x) \ div  b^{ \epsilon}(x) \  \varphi(x) \ dx \,ds .
\]

By  dominated convergence  we obtain

\[
 \int_{0}^{t} \int_{\R^d} u^\epsilon(s,x) \ b^{i, \epsilon}(x) \ \partial _i \varphi(x) \ dx \,ds 
\rightarrow  \int_{0}^{t} \int_{\R^d} u(s,x) \ b^{i}(x) \ \partial _i \varphi(x) \ dx \,ds,
\]

\[
 \int_{0}^{t} \int_{\R^d} u^\epsilon(s,x) \ div  b^{ \epsilon}(x) \  \varphi(x) \ dx \,ds 
\rightarrow   \int_{0}^{t} \int_{\R^d} u(s,x) \ div b(x) \  \varphi(x) \ dx \,ds, 
\]

\[
\frac{1}{2} \int_{0}^{t} \int_{\R^d} u^\epsilon(s,x) \Delta\varphi(x)\,dx\, ds  \rightarrow \frac{1}{2}
 \int_{0}^{t} \int_{\R^d} u(s,x) \Delta\varphi(x)\,dx\, ds\,.
\]

By stochastic  dominated convergence theorem we obtain 

\[
\int_{0}^{t} \int_{\R^d} u^\epsilon(s,x) \ \partial_{i} \varphi(x) \ dx \,{dB^i_s}\rightarrow 
\int_{0}^{t} \int_{\R^d} u(s,x) \ \partial_{i} \varphi(x) \ dx \,{dB^i_s}.
\]

Taking the limit in equation (\ref{transintegralR2})    we conclude that
$u(t,x)=u_{0}(X_{t}^{-1})$  verifies

\begin{align}\label{eq-t1-regul renormalizada com beta-itobis}
\int_{\R^d} u(t,x) \varphi(x) dx   & =\int_{\R^d} u_0(x)\varphi(x) \ dx \nonumber\\[5pt]
                                                   & -\int_{0}^{t} \int_{\R^d} \partial_i u(s,x) \  b^{i}(x) \ \varphi(x) \ dx \,ds \nonumber\\[5pt]
                                                                                  	& + \int_{0}^{t} \int_{\R^d} u(s,x) \ \partial_{i} \varphi(x) \ dx \,{dB^i_s} \nonumber\\[5pt]
                                                   & + \frac{1}{2} \int_{0}^{t} \int_{\R^d} u(s,x) \Delta\varphi(x)\,dx\, ds\,.
\end{align}

Moreover, we have 

\[
\E \int  |  u_0 (\psi_t) |^{2p} \ dx =  \int \E \left[ J\phi_t  \right]  |u_0 (x) |^{2p} \ dx \leq C ,
\]

 and

\[
\E \int  |  D u_0 (\psi_t) |^{p} \  e^{-|x|^{2}} \ dx = \E \int  |  Du_0 (\psi_t)|^{p} \  |D\psi_t   |^{p} e^{-|x|^{2}} \ dx  
\]
\[
\leq   C  ( \E \int  |  D u_0 (\psi_t) |^{2p}  \ e^{-|x|^{2}} dx +  \E \int   |D\psi_t   |^{2p}  e^{-|x|^{2}} dx )
\]

\[
\leq    \E \int  |  D u_0 (x) |^{2p}  \ | J\phi_t| \ e^{-|\phi_t|^{2}} dx      +   C 
\]

\[
\leq  C  \ \E \int  |  D u_0 (x) |^{2p}   dx      +   \  C 
\]

\noindent where we used  (\ref{est1}) and  (\ref{est3}). Thus we conclude $u(t,x)=u_0 (\psi_t)$ that is a 
is a $W^{1,p}$-solution of the Cauchy problem (\ref{trasport}).

\medskip

{\it Step 2: Irregular initial data.}   $u_0 \in  W^{2,p}(\R^d)$. Let $\{\rho_\varepsilon\}_\varepsilon$ be a family of standard symmetric mollifiers.
 Consider a  nonnegative smooth cut-off function $\eta$ supported on the ball of radius 2 and such that
 $\eta=1$ on the ball of radius 1. For every $\varepsilon>0$ introduce the rescaled functions
 $\eta_\varepsilon (\cdot) =  \eta(\varepsilon \cdot)$.  We define the family of regular approximations of the initial condition $u_0^\varepsilon (x) = \eta_\varepsilon(x) \big( [ u_0(\cdot) \ast \rho_\varepsilon (\cdot) ] (x)  \big)$. By the   step 1 we have that 
$u^\varepsilon(t,x) =u_0^\varepsilon(\psi_t)$ verifies

\begin{align}\label{eqirr}
\int_{\R^d} u^\epsilon(t,x) \varphi(x) dx   & =\int_{\R^d} u_0^\varepsilon(x)\varphi(x) \ dx \nonumber\\[5pt]
                                                   & -\int_{0}^{t} \int_{\R^d} \partial_i u^\epsilon(s,x) \ b(x) \ \varphi(x) \ dx \,ds \nonumber\\[5pt]
                                               & + \int_{0}^{t} \int_{\R^d} u^\epsilon(s,x) \ \partial_{i} \varphi(x) \ dx \,{dB^i_s} \nonumber\\[5pt]
                                                   & + \frac{1}{2} \int_{0}^{t} \int_{\R^d} u^\epsilon(s,x) \Delta\varphi(x)\,dx\, ds\,.
\end{align}

 We claim that 

\begin{equation}\label{conver}
  u_0^{^\varepsilon}(\psi)  \rightarrow    u_0(\psi)       \ in \,\,\,\, L^{2p}(\Omega\times[0,T]\times  \R^d) \,,
\end{equation}

In fact,  doing the variable change $x=\psi_t$  we have

$$
\int_{0}^{T} \int_{\R^d} E |u^\epsilon(s,x) -  u(s,x))   |^{2p}\ dx \ ds
$$
$$
=   \int_{0}^{T} \int_{\R^d}   |u_0^\epsilon(x) -  u_0(x)|^{2p}   E | J\phi_t |\ dx \ ds
$$
$$
\leq  C \int_{0}^{T} \int_{\R^d}   |u_0^\epsilon(x) -  u_0(x)|^{2p}   dx \ ds
$$
$$
\leq  C  \int_{\R^d}   |u_0^\epsilon(x) -  u_0(x)|^{2p}   dx \ ds .
$$

where  we used  (\ref{est1}) and (\ref{est3}).

Now, we observe that 

\[
\E \int  |  D u_0 (\psi_t) |^{p} \  e^{-|x|^{2}} \ dx = \E \int  |  Du_0 (\psi_t)|^{p} \  |D\psi_t   |^{p} e^{-|x|^{2}} \ dx  
\]
\[
\leq   C  ( \E \int  |  D u_0 (\psi_t) |^{2p}  \ e^{-|x|^{2}} dx +  \E \int   |D\psi_t   |^{2p}  e^{-|x|^{2}} dx )
\]

\[
\leq    \E \int  |  D u_0 (x) |^{2p}  \  | J\phi_t|  \ e^{-|\phi_t|^{2}} dx      +   C 
\]

\[
\leq  C  \ \E \int  |  D u_0 (x) |^{2p}   dx      +   \  C 
\]

where  we used (\ref{est1}) and  (\ref{est3}). 

 Then  passing to the limit in (\ref{eqirr}) we
 conclude the that $u(t,x)=u_0 (\psi_t)$ is a $W^{1,p}-$solution
of equation (\ref{trasport}).

\end{proof}

\subsection{Uniqueness.}
\label{UNIQUE}

In this section we  prove the uniqueness result for $W^{1,p}$- solutions.  
\bigskip

\begin{proposition}\label{uni} Assume hypothesis (\ref{hyp}).  
If $u_1,u_2 \in L^p([0,T] \times \Omega, W^{1,p}(\mathbb{R}^{d}))$ are two weak  $W^{1,p}-$solutions 
for the Cauchy problem \eqref{trasport}, with the same initial data 
$u_{0}\in W^{1,2p}(\mathbb{R}^{d})$, then  $u_1= u_2$ almost everywhere 
in $[0,T] \times\R^d \times \Omega$. 
\end{proposition}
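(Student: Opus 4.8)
The plan is to reduce to the homogeneous problem and then show that the pull-back of a solution along the forward characteristic flow is constant in time. By linearity of \eqref{trasport}, I set $u := u_1 - u_2$; then $u \in L^p([0,T]\times\Omega, W^{1,p}(\R^d))$ is a $W^{1,p}$-solution with $u_0 = 0$, and it suffices to prove $u \equiv 0$. Starting from the It\^o form of the weak formulation (Remark \ref{lemmaito}) and choosing, for fixed $x$, the test function $\varphi(\cdot) = \rho_\varepsilon(x - \cdot)$ with $\{\rho_\varepsilon\}$ a standard mollifier, I would first derive that the spatial mollification $u_\varepsilon(t,x) := (u(t,\cdot)\ast\rho_\varepsilon)(x)$ solves, pointwise in $x$ and as an It\^o process in $t$,
\begin{equation*}
 du_\varepsilon(t,x) = \Big[\tfrac12\Delta u_\varepsilon - b^i\partial_i u_\varepsilon - R_\varepsilon\Big](t,x)\, dt - \partial_i u_\varepsilon(t,x)\, dB^i_t,
\end{equation*}
where $R_\varepsilon := (b^i\partial_i u)\ast\rho_\varepsilon - b^i\partial_i u_\varepsilon$ is the Di Perna--Lions commutator.

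Since $u_\varepsilon$ is smooth in $x$, the next step is to apply the It\^o--Wentzell (generalized It\^o) formula to the composition $u_\varepsilon(t,\phi_t(x))$, where $\phi_t$ is the forward flow of \eqref{itoass}. A direct computation shows that the transport term $-b^i\partial_i u_\varepsilon$, the correction $\tfrac12\Delta u_\varepsilon$, the two martingale contributions $\mp\partial_i u_\varepsilon\, dB^i$, and the cross-variation term $-\Delta u_\varepsilon$ all cancel, leaving only the commutator:
\begin{equation*}
 u_\varepsilon(t,\phi_t(x)) = u_{0,\varepsilon}(x) - \int_0^t R_\varepsilon(s,\phi_s(x))\, ds = -\int_0^t R_\varepsilon(s,\phi_s(x))\, ds,
\end{equation*}
using $u_0 = 0$. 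The commutator is then controlled by the H\"older regularity of $b$: writing $R_\varepsilon(s,x) = \int [b^i(y) - b^i(x)]\,\partial_i u(s,y)\,\rho_\varepsilon(x-y)\, dy$ and using $|b(y)-b(x)| \le [b]_\theta|x-y|^\theta$ for $|x-y|\le 1$, one gets $|R_\varepsilon(s,\cdot)| \le [b]_\theta\,\varepsilon^\theta\,(|\nabla u(s,\cdot)|\ast\rho_\varepsilon)$ and hence $\|R_\varepsilon(s,\cdot)\|_{L^p(\R^d)} \le C\,\varepsilon^\theta\,\|\nabla u(s,\cdot)\|_{L^p}\to 0$.

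Finally I would pass to the limit $\varepsilon\to 0$. On the left-hand side, $u_\varepsilon(t,\cdot)\to u(t,\cdot)$ in $L^p(\R^d)$; composing with the flow and using the change of variables $y=\phi_s(x)$ together with the moment bounds \eqref{est1} and \eqref{est3} on $D\phi$ (equivalently on $J\phi_s$ and on the inverse flow), one identifies the limit as $u(t,\phi_t(x))$. On the right-hand side, localizing to a ball $B_R$ and changing variables gives
\begin{equation*}
 \E\int_0^t\!\!\int_{B_R}|R_\varepsilon(s,\phi_s(x))|\, dx\, ds = \E\int_0^t\!\!\int_{\phi_s(B_R)}|R_\varepsilon(s,y)|\, J\psi_s(y)\, dy\, ds,
\end{equation*}
and H\"older's inequality with the conjugate exponents $\tfrac1p+\tfrac1q=1$ — the pairing between $R_\varepsilon\in L^p$ and the Jacobian of the inverse flow, whose integrability rests on \eqref{est3} and on $\dive b\in L^q_{loc}$ — forces the right-hand side to $0$. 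Thus $u(t,\phi_t(x)) = 0$ for a.e. $(t,x,\omega)$, and since $\phi_t$ is a diffeomorphism this yields $u\equiv 0$.

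I expect the main obstacle to be precisely this limit passage in the commutator term: one must convert the $\varepsilon^\theta$-gain from the H\"older commutator estimate into a genuine vanishing after the random change of variables, which requires uniform moment control of the (inverse) flow Jacobian and is exactly where the duality $\tfrac1p+\tfrac1q=1$ and the integrability of $\dive b$ enter. A secondary technical point is the rigorous justification of the It\^o--Wentzell formula and the interchange of mollification with the stochastic integral, both of which I would handle using the smoothness of $u_\varepsilon$ and the stochastic dominated convergence theorem.
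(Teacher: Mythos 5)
Your argument is correct in substance, but it takes a genuinely different route from the paper's proof. You apply the It\^o--Wentzell formula to $u_\varepsilon(t,\phi_t(x))$ along the flow of \eqref{itoass} driven by the irregular drift itself, and kill the surviving DiPerna--Lions commutator through the H\"older gain $\|R_\varepsilon(s,\cdot)\|_{L^p}\le C\,\varepsilon^\theta\,\|\nabla u(s,\cdot)\|_{L^p}$ --- essentially the strategy of \cite{FGP2}, upgraded by the fact that here solutions have $\nabla u\in L^p$, so the commutator is estimated directly with no integration by parts. The paper never invokes It\^o--Wentzell for the rough flow: it mollifies both $u$ (parameter $\varepsilon$) and $b$ (parameter $\delta$), observes that $v^{\delta}(t,x)=JY^{\delta}_t\,\varphi(Y_t^{\delta})$ solves the adjoint continuity equation \eqref{trasportC} classically, applies It\^o's product rule to $u_\varepsilon v^{\delta}$ against a cutoff $\eta_R$, and passes to the successive limits $\varepsilon\to 0$, $\delta\to 0$, $R\to\infty$ (using \eqref{est1}--\eqref{est3}, the linear growth of $b$, and $u\in L^{2p}$ for the boundary terms) to reach the same identity $\int u(t,X_t)\varphi\,dx=0$; from there the conclusion $u\equiv 0$ via the diffeomorphism property coincides with yours. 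What each approach buys: yours is shorter, uses a single regularization parameter, and in fact never needs hypothesis \eqref{con2} ($\dive b\in L^{q}_{loc}$), at the price of rigorously justifying the It\^o--Wentzell formula for the semimartingale field $u_\varepsilon$ composed with a flow whose drift is only H\"older (doable, as in \cite{FGP2}, and you correctly flag it as the technical point); the paper's duality route keeps all stochastic calculus at the level of the smooth flow $X^{\delta}$, where Kunita's theory applies verbatim, at the price of a triple limit. Two small corrections to your write-up: the integrability of the Jacobian $J\psi_s$ follows from the moment bounds \eqref{est3} applied to the backward flow (via $J\psi_s=\det D\psi_s$), not from $\dive b\in L^{q}_{loc}$, which plays no role anywhere in your argument; and your unweighted commutator bound is legitimate only because the proposition assumes $u_1,u_2\in L^p([0,T]\times\Omega, W^{1,p}(\R^d))$ with respect to Lebesgue measure --- if only the weighted regularity of Definition \ref{defisolu1} were available, you would first need to localize the estimate on balls, where the Gaussian weight is comparable to a constant.
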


\begin{proof} By linearity, it is enough to show that a weak
$W^{1,p}-$solution $u$ with initial condition $u_{0}(x)=0$ vanishes
identically.   Let $\phi_{\varepsilon}, \phi_{\delta}$ be standard symmetric mollifiers. Thus $u_{\varepsilon}(t,\cdot)=u(t,\cdot)\ast \phi_{\varepsilon}$
verifies
\begin{equation} \label{DISTINTSTRAPP}
\begin{aligned}
    \int_{\R^d} u_\varepsilon(t,z) dz &= -\int_{0}^{t} \!\! \int_{\R^d} \partial_i u(s,z) \ b^i(z)  \phi_\varepsilon(y-z) \ dz ds
\\[5pt]
    &\;  + \int_{0}^{t} \!\! \int_{\R^d} u(s,z) \ \partial_{i} \phi_\varepsilon(y-z) \ dz \, {\circ}{dB^i_s}.
\end{aligned}
\end{equation}

One remarks that, for each $\ve> 0$ the equation for $u_\ve$ is strong in the analytic sense.

\medskip
Now,   we denote by $b^{\delta}$ the standard mollification of $b$,
and let   $X_t^{\delta }$  be  the associated flow given by  the SDE (\ref{itoass}) replacing  $b$ by 
 $b^{\delta}$.  
Similarly, we consider $Y^\delta_{t}$, which satisfies the backward SDE \eqref{itoassBac}.

\medskip
 Doing the variable $y=X_t^{\delta}$ we have 
\begin{equation}
\label{PUSHFORWARD}
    \int_{\R^d}  u_{\varepsilon}(t, X_t^{\delta}) \; \varphi(y) \ dy 
    =\int_{\R^d}  u_{\varepsilon}(t,y) \;    \ JY_t^{\delta} \varphi(Y_t^{\delta}) \ dy ,
\end{equation}

for each $t \in [0,T]$. 

Now, we observe that  by  It\^{o} formula or by Kunita \cite{Ku3} $ v^{\delta}(t,x)=JY^{\delta}_t  \ \varphi(Y_t^{\delta}),$ satisfies the continuity  equation in the classical sense, that is, it satisfies 

\begin{equation}\label{trasportC}
 \left \{
\begin{aligned}
    & dv^{\delta}(t, x) + Div (b^{\delta}(x)  v^{\delta}(t, x) )  \ dt + \nabla v^{\delta}(t, x)  \circ  d B_{t}= 0,
    \\[5pt]
    & v^{\delta}|_{t=0}=  \varphi(x).
\end{aligned}
\right .
\end{equation}

 Consider a  nonnegative smooth cut-off function $\eta$ supported on the ball of radius 2 and such that
 $\eta=1$ on the ball of radius 1. For each $R>0$ introduce the rescaled functions.
 $\eta_R (\cdot) =  \eta(\frac{1}{R} \cdot)$

 Then we may apply It\^o's formula to the product of two semimartingales  

$$u_{\varepsilon}(t,x)  v^{\delta}(t, x),$$ 
and obtain that

\begin{equation}
\label{UNIQ10}
  \begin{aligned}
  \int_{\R^d}   \eta_R (y) u_{\varepsilon}(t,y) & \;  v^{\delta}(t,y) \ dy  
     =-\int_{0}^{t}\!\!  \int_{\R^d}  \eta_R (y)  u_\varepsilon(s,y)  \; div ( b^{\delta}(y) v_s^{\delta}(s,y))   \ dy  \  ds 
\\[5pt]
&- \int_{0}^{t}  \!\! \int_{\R^d} \eta_R (y)  u_\varepsilon(s,y) \;   \partial_{i} [v^{\delta}(s,y)] dy \  \circ dB_{s}^{i} 
\\[5pt]
 &-\int_{0}^{t} \!\! \int_{\R^d} \eta_R (y)  v^{\delta}(s,y)    \int_{\R^d}   \partial_i u(s,z) \; b^{i}(z) \  \phi_{\varepsilon}(y-z) \ dz \ dy \ ds 
\\[5pt]
& +  \int_{0}^{t} \!\! \int_{\R^d} \eta_R (y)  v^{\delta}(s,y)  \int_{\R^d} 
 u(s,z) \; \partial_{i} \phi_{\varepsilon}(y-z) \ dz \ dy \  \circ dB_{s}^{i}. 
\end{aligned}
\end{equation}

We observe  that 

\[
 \int_{0}^{t} \!\! \int_{\R^d} \eta_R (y)  v^{\delta}(s,y)  \int_{\R^d} u(s,z) \; \partial_{i} \phi_{\varepsilon}(y-z) \ dz \ dy \  \circ dB_{s}^{i}.
\]
\[
 = \int_{0}^{t} \!\! \int_{\R^d} \eta_R (y) \partial_i v^{\delta}(s,y)  u_\varepsilon(s,y)  dy \  \circ dB_{s}^{i}
\]
\[
+  \int_{0}^{t} \!\! \int_{\R^d} \partial_i\eta_R (y)  v^{\delta}(s,y)  u_\varepsilon(s,y)  dy \  \circ dB_{s}^{i}.
\]
 
Now for $\delta> 0$ fixed, passing to the limit as $\varepsilon$ goes to $0^+$, we obtain from the above equation 
\begin{equation}
\label{UNIQ20}
  \begin{aligned}
  \int_{\R^d}  u(t,X^\delta_t) & \; \eta_R (X_{t})  \varphi(x) \ dx  \\[5pt]
 & 
     = -\int_{0}^{t}\!\!    \int_{\R^d} \eta_R (y)   u(s,y)  \; div ( b^{\delta}(y) v^{\delta}(s,y))   \ dy  \  ds 
		\\[5pt]
 & -\int_{0}^{t} \!\! \int_{\R^d}  \eta_R (y)  \partial_i u(s,y) \; b^{i}(y) v^{\delta}(s,y)  \ dy \ ds
\\[5pt] &
+  \int_{0}^{t} \!\! \int_{\R^d} \partial_i\eta_R (y)  v^{\delta}(s,y)  u_\varepsilon(s,y)  dy \  \circ dB_{s}^{i}.
\end{aligned}
\end{equation}

Thus we deduce

\[
 \int_{\R^d}  u(t,X^\delta_t)  \; \eta_R (X_{t})  \varphi(x) \ dx  
\]

\[
=\int_{0}^{t}\!\!    \int_{\R^d} \eta_R (y)   \partial_i u(s,y)  \;  b^{\delta}(y) v^{\delta}(s,y)   \ dy  \  ds 
\]
\[
+ \int_{0}^{t}\!\!    \int_{\R^d} \partial_i \eta_R (y)   u(s,y)  \;  b^{\delta}(y) v^{\delta}(s,y)   \ dy  \  ds
\]
\[
-\int_{0}^{t} \!\! \int_{\R^d}  \eta_R (y)  \partial_i u(s,y) \; b^{i}(y) v^{\delta}(s,y)  \ dy \ ds
\]

\[
+ \int_{0}^{t} \!\! \int_{\R^d} \partial_i\eta_R (y)  v^{\delta}(s,y)  u(s,y)  dy \  \circ dB_{s}^{i}.
\]

Then, by   (\ref{est2}) and  applying the Dominated Convergence Theorem we
pass to the limit  as $\delta$ goes to $0^+$, to conclude that

\[
 \int_{\R^d}  u(t,X_t)  \; \eta_R (X_{t})  \varphi(x) \ dx  
\]

\[
= \int_{0}^{t}\!\!    \int_{\R^d} \partial_i \eta_R (y)   u(s,y)  \;  b(y) v(s,y)   \ dy  \  ds
\]

\begin{equation}\label{final}
+ \int_{0}^{t} \!\! \int_{\R^d} \partial_i\eta_R (y)  v(s,y)  u(s,y)  dy \  \circ dB_{s}^{i}.
\end{equation}

We observe that 

\[
 |\int_{0}^{t}\!\!    \int_{\R^d} \partial_i \eta_R (y)   u(s,y)  \;  b(y) v(s,y)   \ dy  \  ds|
\]
\[
 \leq C \big(|\int_{0}^{t}\!\!    \int_{R\leq|y|\leq 2R}   |u(s,y)|^{2p}  \; dy  \  ds|\big)^{\frac{1}{2p}}
\]

\noindent where used that $b$ is the linear growth, and

\[
 | \int_{\R^d} \partial_i\eta_R (y)  v(s,y)  u(s,y)  dy |
\]

\[
\leq \frac{1}{R} \big(| \int_{0}^{t} \int_{0}^{t}\!\!    \int_{R\leq|y|\leq 2R}   |u(s,y)|^{2p}  \; dy  \  ds|\big)^{\frac{1}{2p}}
\]

\noindent Passing to the limit as $R\rightarrow\infty$ we obtain  
\begin{equation}
\label{UNIQ30}
\int_{\R^d} u(t, X_t)  \varphi(x) dx =0
\end{equation}
for each $\varphi \in C_0^\infty(\R^d)$, and $t \in [0,T]$. 

\bigskip
 Thus , we have 
$$
  \begin{aligned}
    \!\! \int   \mathbb{E}| u(t,x) |^{2p}  \ dx &=   \!\! \int \mathbb{E}|  u(t, X_t(Y_t) ) |^{2p}  \ dx 
\\[5pt]
     &=   \;  \mathbb{E}   \!\!  \int JX_t    | \eta_R (X_{t}) u(t, X_t) |^{2p}  \ dx  =0
\\[5pt]  
\end{aligned}
$$
 where we have used \eqref{UNIQ30} and  that $X_t$ is a stochastic flows of diffeomorphism.  
Consequently, the thesis of our  theorem is proved. 
\end{proof}

\bigskip

\subsection{ Strong  Stability.}

To end up the well-posedness for the Cauchy problem 
\eqref{trasport}, it remains to show the stability 
 property for the solution with respect to the initial datum and respect to the drift term

\begin{proposition}\label{strong} Assume cond.  
Let $\{u_0^n\}$ be any sequence, with $u_0^n \in W^{1,2p}(\R^d)$ $(n \geq 1)$,  strong converging in  $W^{1,2p}$
to $u_0$. Let $u(t,x)$, $u^n(t,x)$ be the unique weak $W^{1,p}-$solution of the Cauchy problem \eqref{trasport},
for respectively the initial data $u_0$ and $u_0^{n}$. Then $u^n(t,x)$ strong converge to $u(t,x)$
in $L^{2p}(\Omega\times [0,T] \times \R^d)$ and  in $L^{p}(\Omega\times [0,T] , W^{1,p}(\R^d),\mu)$.
\end{proposition}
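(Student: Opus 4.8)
The plan is to exploit the explicit representation of the solutions provided by Proposition \ref{renor} and Proposition \ref{uni}. Since the drift $b$ is the same for every $n$, the backward stochastic flow $\psi_t$ is one and the same for all the equations; by Proposition \ref{renor} the unique $W^{1,p}$-solutions are $u(t,x)=u_0(\psi_t)$ and $u^n(t,x)=u_0^n(\psi_t)$, so the difference is transported by this single flow,
\[
u^n(t,x)-u(t,x)=(u_0^n-u_0)(\psi_t(x)).
\]
This reduces the whole statement to a change-of-variables estimate: no flow-convergence estimate (\ref{est1})--(\ref{est2}) is needed, only the uniform bound (\ref{est3}) on $D\phi_t$, $D\psi_t$ together with the resulting control $\E[J\phi_t]\leq C$ on the Jacobian, exactly as already used in Step~2 of Proposition \ref{renor}.

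For the $L^{2p}$-convergence I would repeat that computation. Performing the change of variable $y=\psi_t(x)$, i.e. $x=\phi_t(y)$ with $dx=J\phi_t\,dy$, and using $\E[J\phi_t]\leq C$,
\[
\E \int_0^T \!\! \int_{\R^d} |u^n(t,x)-u(t,x)|^{2p}\,dx\,dt
= \E \int_0^T \!\! \int_{\R^d} |u_0^n(y)-u_0(y)|^{2p}\,J\phi_t\,dy\,dt
\leq C\,T\,\|u_0^n-u_0\|_{L^{2p}}^{2p},
\]
which tends to $0$ since $u_0^n\to u_0$ in $W^{1,2p}$. The zeroth-order part of the weighted $W^{1,p}$-norm follows a fortiori, because $\mu\leq 1$ and $\int_{\R^d}\mu\,dx<\infty$ give $\E\int|u^n-u|^p\mu\,dx\,dt\leq C\big(\E\int|u^n-u|^{2p}\,dx\,dt\big)^{1/2}\to 0$ by H\"older.

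For the gradient term I would differentiate the representation formula by the chain rule, $Du^n(t,x)-Du(t,x)=(Du_0^n-Du_0)(\psi_t)\,D\psi_t$, so that
\[
\E \int_0^T \!\! \int_{\R^d} |Du^n-Du|^p\,\mu\,dx\,dt
= \E \int_0^T \!\! \int_{\R^d} |(Du_0^n-Du_0)(\psi_t)|^p\,|D\psi_t|^p\,\mu\,dx\,dt .
\]
Applying the Cauchy--Schwarz inequality in $(\omega,t,x)$ bounds this by
\[
\left(\E \int_0^T \!\! \int_{\R^d} |(Du_0^n-Du_0)(\psi_t)|^{2p}\,\mu\,dx\,dt\right)^{1/2}
\left(\E \int_0^T \!\! \int_{\R^d} |D\psi_t|^{2p}\,\mu\,dx\,dt\right)^{1/2}.
\]
The second factor is bounded by a constant thanks to (\ref{est3}) for the backward flow. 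In the first factor I would again change variables $y=\psi_t(x)$ and use $\mu(\phi_t)=e^{-|\phi_t|^{2}}\leq 1$ together with $\E[J\phi_t]\leq C$ to obtain
\[
\E \int_0^T \!\! \int_{\R^d} |(Du_0^n-Du_0)(y)|^{2p}\,e^{-|\phi_t|^{2}}\,J\phi_t\,dy\,dt
\leq C\,T\,\|Du_0^n-Du_0\|_{L^{2p}}^{2p}\to 0 ,
\]
since $u_0^n\to u_0$ in $W^{1,2p}$. Combining the displays yields both claimed convergences.

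The main obstacle I anticipate is the gradient estimate: one must split the product $|(Du_0^n-Du_0)(\psi_t)|^p|D\psi_t|^p$ with H\"older (\emph{not} Young) so that the non-vanishing factor $\E|D\psi_t|^{2p}$ contributes only a bounded constant while the vanishing factor $\|Du_0^n-Du_0\|_{L^{2p}}$ drives the limit, and one must keep track of the Gaussian weight $\mu$ under the change of variables, where the elementary bound $\mu\circ\phi_t\leq 1$ is precisely what makes the weight harmless. Everything else is a routine repetition of the change-of-variables arguments already carried out in Proposition \ref{renor}.
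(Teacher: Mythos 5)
Your proposal is correct and follows essentially the same route as the paper: the representation $u^n-u=(u_0^n-u_0)(\psi_t)$ from the existence and uniqueness results, a change of variables with $\E[J\phi_t]\leq C$ for the $L^{2p}$ part, and the chain rule plus a Cauchy--Schwarz splitting with the uniform moment bound (\ref{est3}) and $e^{-|\phi_t|^2}\leq 1$ for the weighted gradient part. Your two small additions --- observing that (\ref{est1})--(\ref{est2}) are not actually needed here (the paper cites (\ref{est1}) but only Jacobian-moment bounds of the type (\ref{est3}) are used), and explicitly checking the zeroth-order term of the weighted $W^{1,p}$ norm via the integrability of $\mu$ --- are harmless refinements, not a different method.
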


\begin{proof} By existence and uniqueness theorems we have  that

\[
u^n(t,x)=u_0^{n}(X_{t}^{-1})
\]

and 

\[
u(t,x)=u_0(X_{t}^{-1}).
\]

Then we obtain 

\[
\int_{0}^{T} \int_{\R^{d}} \E|u^n(t,x)-u(t,x)|^{2p} dx
\]

\[
=\int_{0}^{T} \int_{\R^{d}} \E| u_0^{n}(X_{t}^{-1})- u_0(X_{t}^{-1})|^{2p} dx ds
\]

\[
=\int_{0}^{T} \int_{\R^{d}} \E  | u_0^{n}(x)- u_0(x)|^{2p} |JX_t|  dx ds
\]

\begin{equation}\label{estab1}
\leq C  \int_{\R^{d}}  | u_0^{n}(x)- u_0(x)|^{2p} dx 
\end{equation}

where we used  (\ref{est1}) and (\ref{est3}).

Now, we have

\[
\int_{0}^{T} \int_{\R^{d}} \E|D[u^n(t,x)-u(t,x)]|^{p} e^{-|x|^{2}} dx ds
\]

\[
=\int_{0}^{T} \int_{\R^{d}} \E| Du_0^{n}(X_{t}^{-1})- Du_0(X_{t}^{-1})|^{p} |DX_{t}^{-1}|^{p} e^{-|x|^{2}}  dx ds
\]

\[
\leq   \big( \int_{0}^{T} \int_{\R^{d}} \E| Du_0^{n}(X_{t}^{-1})- Du_0(X_{t}^{-1})|^{2p}  e^{-|x|^{2}}  dx ds    \big)^{\frac{1}{2}}
\]

\[
  \times \big( \int_{0}^{T} \int_{\R^{d}}\E|DX_{t}^{-1}|^{2p}   e^{-|x|^{2}}  dx ds    \big)^{\frac{1}{2}}
\]

\[
\leq  C   \big( \int_{0}^{T} \int_{\R^{d}} \E| Du_0^{n}(x)- Du_0(x)|^{2p} \  JX_t \  e^{-|X_{t}|^{2}}  dx ds    \big)^{\frac{1}{2}}
\]

\begin{equation}\label{estab2}
\leq  C  \big(  \int_{\R^{d}}| Du_0^{n}(x)- Du_0(x)|^{2p}   dx    \big)^{\frac{1}{2}}
\end{equation}

where we used    Holder inqueality, (\ref{est1}) and  (\ref{est3}).

  Finally, from  estimation  (\ref{estab1}) and (\ref{estab2}) we conclude our proposition.

\end{proof}

\begin{proposition}\label{strong}Assume hypothesis \ref{hyp} for  $b$ and $b_{n}$ and $u_{0}\in C^{1,\alpha}(\R^{d})$. Moreover, we also assume 
$\|b_n -b\|_{C_b^{\theta}(\R^d,\R^d)}\rightarrow 0 $ as $n\rightarrow \infty$. Let $u(t,x)$, $u^n(t,x)$ be the unique weak $W^{1,p}-$solution of the Cauchy problem \eqref{trasport},
for respectively the drift  $b$ and $b_n$. Then $u^n(t,x)$ strong converge to $u(t,x)$
in $L^{p}(\Omega\times [0,T],W^{1,p}(\R^d),\mu)$.
\end{proposition}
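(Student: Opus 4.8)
The plan is to exploit the explicit representation of the solutions furnished by Proposition~\ref{renor}, namely $u^n(t,x)=u_0(\psi^n_t(x))$ and $u(t,x)=u_0(\psi_t(x))$, where $\psi^n$ and $\psi$ denote the backward stochastic flows associated to the drifts $b_n$ and $b$. Since $\|b_n-b\|_{C_b^{\theta}(\R^d,\R^d)}\to 0$, the convergence estimates \eqref{est1}, \eqref{est2} and the uniform bound \eqref{est3} are available for these backward flows, as remarked after their statement. The target is to show
\[
\int_0^T \E\!\int_{\R^d}\bigl(|u^n-u|^p+|Du^n-Du|^p\bigr)\,e^{-|x|^2}\,dx\,dt\longrightarrow 0 .
\]
The zeroth-order contribution is the easier one: writing $u^n-u=u_0(\psi^n_t)-u_0(\psi_t)$ and using that $Du_0$ is bounded, one has $|u^n-u|\le\|Du_0\|_\infty|\psi^n_t-\psi_t|$, and after inserting the factor $(1+|x|)^p$ and its reciprocal, estimate \eqref{est2} together with the integrability of $(1+|x|)^p e^{-|x|^2}$ forces this piece to vanish (keeping the $\sup_x$ outside the spatial integral).

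For the gradient term I would differentiate the representation by the chain rule, $Du^n=Du_0(\psi^n_t)\,D\psi^n_t$, and split
\[
Du^n-Du=\bigl[Du_0(\psi^n_t)-Du_0(\psi_t)\bigr]D\psi^n_t+Du_0(\psi_t)\bigl[D\psi^n_t-D\psi_t\bigr]=:A_n+B_n .
\]
The term $B_n$ is handled directly: bounding $|Du_0(\psi_t)|\le\|Du_0\|_\infty$ and integrating $|D\psi^n_t-D\psi_t|^p$ against $e^{-|x|^2}$, the uniform-in-$x$ convergence \eqref{est1} and $\int e^{-|x|^2}dx<\infty$ give $\E\int_0^T\!\int|B_n|^p e^{-|x|^2}\,dx\,dt\to 0$.

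The main work is the cross term $A_n$, and this is the step I expect to be the obstacle. Here I would apply the Cauchy--Schwarz inequality in $(\omega,x)$ to separate the two factors,
\[
\E\!\int|A_n|^p e^{-|x|^2}dx\le\Bigl(\E\!\int|Du_0(\psi^n_t)-Du_0(\psi_t)|^{2p}e^{-|x|^2}dx\Bigr)^{\!1/2}\Bigl(\E\!\int|D\psi^n_t|^{2p}e^{-|x|^2}dx\Bigr)^{\!1/2}.
\]
The second factor stays bounded uniformly in $n$ by \eqref{est3}. For the first factor I would invoke the H\"older regularity of $Du_0$ coming from $u_0\in C^{1,\alpha}$, namely $|Du_0(\psi^n_t)-Du_0(\psi_t)|\le[Du_0]_\alpha|\psi^n_t-\psi_t|^{\alpha}$; inserting once more the normalisation by $(1+|x|)$, the moment bound \eqref{est2}, valid for every exponent, applied with exponent $2p\alpha$ shows that this factor tends to zero, the polynomial weight $(1+|x|)^{2p\alpha}$ being absorbed by the Gaussian $e^{-|x|^2}$. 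The delicate points are precisely the bookkeeping of the H\"older exponent $2p\alpha$ in \eqref{est2} and the simultaneous use of the uniform Jacobian bound \eqref{est3}, which is what makes the product $A_n$ integrable while still converging; once $A_n$ and $B_n$ are shown to vanish, combining them with the zeroth-order estimate and integrating in $t$ concludes the proof.
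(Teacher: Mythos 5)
Your proposal is correct and follows essentially the same route as the paper: both use the representations $u^n=u_0(X_t^{-1,n})$, $u=u_0(X_t^{-1})$, the identical splitting $Du^n-Du=A_n+B_n$, Cauchy--Schwarz on the cross term, the $C^{1,\alpha}$ H\"older bound $|Du_0(X_t^{-1,n})-Du_0(X_t^{-1})|\le C|X_t^{-1,n}-X_t^{-1}|^{\alpha}$ combined with \eqref{est2} (with exponent $2p\alpha$), and \eqref{est1}, \eqref{est3} for the remaining factors. The only cosmetic differences are that you handle the zeroth-order term via the Lipschitz bound on $u_0$ rather than the paper's H\"older estimate, and you bound $B_n$ directly by $\|Du_0\|_\infty$ where the paper inserts an extra Cauchy--Schwarz; both are equivalent.
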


\begin{proof}From  existence and uniqueness theorems we have  that

\[
u^n(t,x)=u_0(X_{t}^{-1,n})
\]

and 

\[
u(t,x)=u_0(X_{t}^{-1}).
\]

Then we obtain 

\[
\int_{0}^{T} \int \E|u^n(t,x)-u(t,x)|^{2p} e^{-|x|^{2}} dx ds
\]

\[
=\int_{0}^{T} \int \E| u_0(X_{t}^{-1})- u_0(X_{t}^{-1,n})|^{2p} e^{-|x|^{2}} dx ds
\]

\[
\leq \int_{0}^{T} \int \E  |X_{t}^{-1}- X_{t}^{-1,n}|^{\alpha}  e^{-|x|^{2}}   dx  ds.
\]

From (\ref{est2}) we conclude that $u^{n}\rightarrow u$ in $L^{p}(\Omega\times [0,T] \times \R^{d},\mu)$.

We get 
\[
\int_{0}^{T} \int \E|D[u^n(t,x)-u(t,x)]|^{p} e^{-|x|^{2}}  dx ds
\]

\[
=\int_{0}^{T} \int \E| Du_0(X_{t}^{-1,n})DX_{t}^{-1,n} - Du_0(X_{t}^{-1})DX_{t}^{-1}|^{p} e^{-|x|^{2}}   dx ds
\]

\[
\leq \int_{0}^{T} \int \E| Du_0(X_{t}^{-1,n})DX_{t}^{-1,n} - Du_0(X_{t}^{-1})DX_{t}^{-1,n}|^{p}  e^{-|x|^{2}}  dx  ds
\]

\begin{equation}\label{R2}
+ \int_{0}^{T} \int_{\R^{K}} \E| Du_0(X_{t}^{-1})DX_{t}^{-1,n} - Du_0(X_{t}^{-1})DX_{t}^{-1}|^{p}  e^{-|x|^{2}}  dx ds
\end{equation}

We have

\[
 \int_{0}^{T} \int \E| Du_0(X_{s}^{-1,n})DX_{t}^{-1,n} - Du_0(X_{s}^{-1})DX_{t}^{-1,n}|^{p}   e^{-|x|^{2}}   dx  ds
\]

\[
\leq  \big(\int_{0}^{T} \int (\E| Du_0(X_{s}^{-1,n}) - Du_0(X_{s}^{-1})|^{2p})  e^{-|x|^{2}} dx ds \big)^{\frac{1}{2}}
\]

\[
\times   \big(\int_{0}^{T} \int  |DX_{s}^{-1,n}|^{2p}  e^{-|x|^{2}}  dx ds \big)^{\frac{1}{2}}
\]

\[
\leq C  \big(\int_{0}^{T} \int (\E| X_{s}^{-1,n} -X_{s}^{-1}|^{2p\alpha} e^{-|x|^{2}} dx ds \big)^{\frac{1}{2}}
\]

where we used     (\ref{est1})  and
  (\ref{est3}) . From (\ref{est2}) we conclude that

\begin{equation}\label{R3}
 \int_{0}^{T} \int \E| Du_0(X_{t}^{-1,n})DX_{t}^{-1,n} - Du_0(X_{t}^{-1})DX_{t}^{-1,n}|^{p}  e^{-|x|^{2}}   dx \rightarrow 0
\end{equation}

as $n\rightarrow\infty$.

Now, we obtain 

\[
\int_{0}^{T} \int \E| Du_0(X_{t}^{-1})DX_{t}^{-1,n} - Du_0(X_{t}^{-1})DX_{t}^{-1}|^{p}   e^{-|x|^{2}}  dxds
\]

\[
\leq \big( \int_{0}^{T} \int \E| Du_0(X_{t}^{-1})|^{2p}    e^{-|x|^{2}}  dxds\big)^{\frac{1}{2}}
\]

\[
 \times  \big( \int_{0}^{T} \int (\E| DX_{t}^{-1,n} - DX_{t}^{-1}|^{2p}    e^{-|x|^{2}}  dxds \big)^{\frac{1}{2}} 
\]

\[
\leq \big( \int_{0}^{T} \int (\E| DX_{t}^{-1,n} - DX_{t}^{-1}|^{2p}    e^{-|x|^{2}}  dxds \big)^{\frac{1}{2}} 
\]

where we used (\ref{est1}) and  (\ref{est3}).  From (\ref{est1}) we deduce that

\begin{equation}\label{R4}
\int_{0}^{T}   e^{-|x|^{2}} \int  \E| Du_0(X_{t}^{-1})DX_{t}^{-1,n} - Du_0(X_{t}^{-1})DX_{t}^{-1}|^{p} ddx ds   dx\rightarrow 0
\end{equation}

as $n\rightarrow\infty$.

Finally from  (\ref{R2}), (\ref{R3}) and (\ref{R4}) we conclude our proposition.

\end{proof}

\section*{Acknowledgements}

Christian Olivera is partially supported by  CNPq
through the grant 460713/2014-0 and FAPESP by the grants 2015/04723-2 and 2015/07278-0.


\end{document}